\newtheorem{theorem}{\bf Theorem}[section]
\newtheorem{corollary}[theorem]{\bf Corollary}
\newtheorem{lemma}[theorem]{\bf Lemma}
\newtheorem{proposition}[theorem]{\bf Proposition}
\newcommand{\proof}{\noindent{\bf Proof.\ }}
\newcommand{\qed}{\hfill $\Box$ \bigskip}
\newcommand{\cp}{\,\square\,}
\newcommand{\diam}{{\rm diam}}
\begin{document}

\title{On $\ell$-distance balanced product graphs}

\author{Janja Jerebic $^{a,b}$
\and
Sandi Klav\v zar $^{b,c,d}$
\and
Gregor Rus $^{a,d}$
}

\date{}

\maketitle

\begin{center}
$^a$ Faculty of Organizational Sciences, University of Maribor, Slovenia\\
\medskip

$^b$ Faculty of Natural Sciences and Mathematics, University of Maribor, Slovenia \\
\medskip

$^c$ Faculty of Mathematics and Physics, University of Ljubljana, Slovenia \\
\medskip

$^{d}$ Institute of Mathematics, Physics and Mechanics, Ljubljana, Slovenia \\
\medskip
\end{center}

\begin{abstract}
A graph $G$ is $\ell$-distance-balanced if for each pair of vertices $x$ and $y$ at distance $\ell$ 
in $G$, the number of vertices closer to $x$ than to $y$ is equal to the number of vertices closer 
to $y$ than to $x$. A complete characterization of $\ell$-distance-balanced corona products is given 
and a characterization of lexicographic products for $\ell \ge 3$, thus complementing known results 
for $\ell\in \{1,2\}$ and correcting an earlier related assertion. A sufficient condition on $H$ 
which guarantees that $K_n\cp H$ is $\ell$-distance-balanced is given and it is proved that if 
$K_n\cp H$ is $\ell$-distance-balanced, then $H$ is an $\ell$-distance-balanced graph. A known 
characterization of $1$-distance-balanced graphs is extended to $\ell$-distance-balanced graphs, 
again correcting an earlier claimed assertion.
\end{abstract}

\noindent
{\bf E-mails}: janja.jerebic@um.si, sandi.klavzar@fmf.uni-lj.si, gregor.rus4@um.si

\medskip\noindent
{\bf Key words}: distance balanced graphs; lexicographic products; corona products;\\ 
Cartesian products; diameter-$2$ graphs; complete graphs

\medskip\noindent
{\bf AMS Subj. Class.}: 05C12, 05C76

\section{Introduction}
\label{sec:intro}

The investigation of distance-balanced graphs was initiated over twenty years ago in~\cite{handa-99}, an explicit definition of the concept was however given only a decade later in~\cite{jeklara-2008}. Distance-balanced graphs have since then been extensively studied by many authors from various points of view. On one side they were considered from the pure graph theoretical point of view~\cite{bcpss-2009, calu-2011, kmmm-2006, mispa-2012, polat-2019}. On the other hand they found significant applications in other areas, such as mathematical chemistry, communication networks, game theory, strategic interaction models, and elsewhere, see~\cite{bbckvzp-2014, ikm-2010, jeklara-2008, kyaaw-2009, khpo-2015}. We also refer to~\cite{cavaleri-2019} for a nice description of some of these applications as well as for connections between distance-balanced graphs and wreath products. Among many appealing results on distance-balanced graphs we point out that the class of distance-balanced graphs coincides with self-median graphs~\cite{bcpss-2009} and that they can also be characterized as the graphs whose opportunity index is zero~\cite{bbckvzp-2014}. Moreover, in mathematical chemistry the so-called Mostar index was introduced in~\cite{doslic-2018} as a  measure of how far a given graph is from being distance-balanced, see also~\cite{deng-2020, tepeh-2019}.

Considerable effort has been devoted to explore different generalizations of distance-balanced graphs, where one still focuses just on pairs of adjacent vertices~\cite{bcpss-2009, cado-2019, kmmm-2009, 
kuma-2014}. In addition, there is a very natural generalization of distance-balancedness to pairs of nonadjacent vertices. This idea can be traced back to the thesis of Frelih~\cite{fre-2014}, where $\ell$-distance-balanced graphs are introduced such that $1$-distance-balanced graphs coincide with distance-balanced graphs. 

Properties and general results on $\ell$-distance-balanced graphs have been discussed in several recently published papers. In particular, connected $2$-distance-balanced graphs which are not 2-connected, and $2$-distance-balanced graphs that can be represented as the Cartesian or the lexicographic product of two graphs were characterized in~\cite{fremi-2018}. In \cite{mispa-2018} infinitely many examples of $\ell$-distance-balanced graphs were presented, and $\ell$-distance-balanced graphs of diameter at most $3$ investigated in detail. Moreover, $\ell$-distance-balancedness of generalized Petersen graphs was analyzed. Now, the following~\cite[Problem 6.4]{mispa-2018} intrigued our attention: study $\ell$-distance-balanced graphs with respect to various graph products. In this paper we focus on the lexicographic, corona and Cartesian product which were already in the center of earlier investigations of 
$\ell$-distance-balanced graphs with respect to graph products.

Distance-balanced lexicographic product graphs were characterized in~\cite[Theorem 4.2]{jeklara-2008}, while one of the main objectives of~\cite{fapokh-2016} was to characterize $\ell$-distance-balanced lexicographic products for every positive integer 
$\ell$. But~\cite[Theorem 3.4]{fapokh-2016} is not correct for $\ell\ge 2$. For $\ell = 2$, the result was corrected 
in~\cite[Theorem 5.4]{fremi-2018}. Here, in  Section~\ref{sec:lexicographic}, we do the same for every $\ell \ge 3$. 
Corona product graphs in association with distance-balanced property have been (according to our knowledge) studied 
only in~\cite{tayo-2011}. It is known that the corona product of nontrivial, connected graphs is never distance-balanced. In Section~\ref{sec:corona} we characterize $\ell$-distance-balanced corona product graphs for every $\ell \geq 2$. Next, $1$-distance-balanced and $2$-distance-balanced Cartesian product graphs were characterized in~\cite[Proposition 4.1]{jeklara-2008} and~\cite[Theorem 4.4]{fremi-2018}, respectively. The difficulty of going from the first to the second result indicates that it might be very difficult to characterize $\ell$-distance-balanced Cartesian products for arbitrary $\ell$. In Section~\ref{sec:cartesian} we hence restrict ourselves to the case when one factor is complete. We give a sufficient condition on $H$ which guarantees that $K_n\cp H$ is $\ell$-distance-balanced and prove that if $K_n\cp H$ is $\ell$-distance-balanced, then $H$ is $\ell$-distance-balanced graph. In Section~\ref{sec:characterize} we give a characterization of $\ell$-distance-balanced graphs which extends the case $\ell=1$ from~\cite[Proposition 2.1]{jeklara-2008} and corrects the general case from~\cite[Proposition 2.2]{fapokh-2016}. 
Before giving our results, basic concepts used in this paper are introduced in the next section.

\section{Preliminaries}
\label{sec:prem}

In this section we introduce our notation and basic definitions. Throughout this paper, all graphs are simple, connected, undirected and finite. For a graph $G$, let $V(G)$ denote the set of vertices and $E(G)$ the set of edges of $G$. If $g_1,g_2\in V(G)$, then set
\begin{align*}
W_{g_1g_2} & = \{ g\in V(G):\ d_G(g,g_1) < d_G(g,g_2)\}\,, \\
{}_{g_1}\hspace{-1mm}W_{g_2} & = \{ g\in V(G):\ d_G(g,g_1)=d_G(g,g_2)\}\,,
\end{align*}
where $d_G(g_1,g_2)$ or simply $d(g_1,g_2)$ denotes the geodesic distance in $G$. 
In other words, $W_{g_1g_2}$ is the set of vertices in $G$ that are closer to 
$g_1$ than to $g_2$. The {\em diameter} $\diam(G)$ of a connected graph $G$ is the maximum distance between pairs of vertices of $G$.  If $\ell$ is a positive integer and $\diam(G)\geq \ell$, then we say that $G$ is {\em $\ell$-distance-balanced} if for any pair of vertices $g_1,g_2\in V(G)$ 
with $d_G(g_1,g_2)=\ell$ we have $|W_{g_1g_2}|=|W_{g_2g_1}|$. If the last equality 
holds for every $1\leq\ell\leq\diam(G)$, we say that $G$ is {\em highly distance-balanced}.
For instance, cycles and complete graphs are simple examples of such graphs. In addition,
every distance-regular graph is highly distance-balanced \cite{brocone-1998}. For more 
results on highly distance-balanced graphs see \cite{mispa-2018}.

Let $G\cp H$ and $G[H]$ respectively denote the {\em Cartesian product} and the {\em lexicographic product} of graphs $G$ and $H$. Both these  graph products have the vertex set $V(G) \times V(H)$. 
Vertices $(g_1,h_1)$ and $(g_2,h_2)$ are adjacent in $G\cp H$ if either $g_1=g_2$ and 
$h_1h_2\in E(H)$, or $h_1=h_2$ and $g_1g_2\in E(G)$. If $h\in V(H)$, then the subgraph 
of $G\cp H$ induced by the vertices $(g,h)$, $g\in V(G)$, is a {\em $G$-layer} and is 
denoted by $G^h$. Analogously $H$-layers $^gH$ are defined. $G$-layers and $H$-layers 
are isomorphic to $G$ and to $H$, respectively. Recall that
\begin{eqnarray}
\label{distance_cartesian}
d_{G\cp H}((g_1,h_1),(g_2,h_2))=d_G(g_1,g_2)+d_H(h_1,h_2).
\end{eqnarray}

Vertices $(g_1,h_1)$ and $(g_2,h_2)$ are adjacent in $G[H]$ if $g_1g_2\in E(G)$ or if 
$g_1=g_2$ and $h_1h_2\in E(H)$. The distance between two different vertices $(g_1,h_1)$ 
and $(g_2,h_2)$ in $G[H]$ for $G\neq K_1$ is determined as follows:
\begin{eqnarray}
\label{distance_lexicographic}
d_{G[H]}((g_1,h_1),(g_2,h_2))=\left\{\begin{array}{ll}
d_G(g_1,g_2); & g_1\neq g_2\,, \\
1;            & g_1=g_2 \quad{\rm and} \quad h_1h_2\in E(H)\,,\\
2;            & g_1=g_2 \quad{\rm and} \quad h_1h_2\notin E(H)\,.
\end{array}\right.
\end{eqnarray}

The {\em corona product} $G\circ H$ of graphs $G$ and $H$  is a graph obtained by taking one copy of $G$ and $|V(G)|$ copies of $H$ and joining each vertex of the $i$-th copy of $H$ with the $i$-th vertex of $G$. The vertex set of $G\circ H$ can therefore be written as $V(G\circ H) = \{(g,h): g \in V(G), h \in V(H) \cup \{0\}\}$, where the vertices $(g,0)$, $g\in V(G)$, correspond to the vertices of a copy of $G$ in $G\circ H$.

\section{On $\ell$-distance-balanced lexicographic products}
\label{sec:lexicographic}

As already explained in the introduction, $1$-distance-balanced lexicographic product graphs and $2$-distance-balanced lexicographic product graphs were characterized in~\cite[Theorem 4.2]{jeklara-2008} and in~\cite[Theorem 5.4]{fremi-2018}, respectively. In this section we give a characterization of $\ell$-distance-balanced lexicographic products for $\ell \ge 3$. This corrects~\cite[Theorem 3.4]{fapokh-2016} where a redundant condition of local regularity is required for the second factor. We begin with the following lemma needed for the announced characterization. 

\begin{lemma}
\label{lemma_lex}
Let $x=(g_1,h_1)$ and $y=(g_2,h_2)$ be arbitrary vertices of $\Gamma=G[H]$ with 
$d_G(g_1,g_2)=\ell\geq 3$. Then
$$|W_{xy}|=|W_{g_1g_2}|\cdot |V(H)|\,.$$
\end{lemma}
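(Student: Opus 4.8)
The plan is to count $|W_{xy}|$ by organizing the vertices of $\Gamma$ according to their $G$-coordinate. For a vertex $(g,h)$ I will compare $d_\Gamma((g,h),x)$ and $d_\Gamma((g,h),y)$ using the distance formula~(\ref{distance_lexicographic}), and the crucial observation is that the outcome of this comparison depends only on $g$, never on $h$. This reduces the count over the whole product to a count over $V(G)$, each relevant $G$-coordinate contributing a full $H$-fiber of size $|V(H)|$.

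First I would treat the generic fibers, those with $g\notin\{g_1,g_2\}$. Since $g\neq g_1$ and $g\neq g_2$, the first line of~(\ref{distance_lexicographic}) gives $d_\Gamma((g,h),x)=d_G(g,g_1)$ and $d_\Gamma((g,h),y)=d_G(g,g_2)$, independently of $h$. Hence $(g,h)\in W_{xy}$ if and only if $d_G(g,g_1)<d_G(g,g_2)$, that is, if and only if $g\in W_{g_1g_2}$. So for such $g$ the entire fiber lies in $W_{xy}$ exactly when $g\in W_{g_1g_2}$, and lies outside $W_{xy}$ otherwise.

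Next I would handle the two special fibers over $g_1$ and $g_2$, and this is the step where the hypothesis $\ell\geq 3$ does the real work. For a vertex $(g_1,h)$ the intra-fiber distance $d_\Gamma((g_1,h),x)$ is at most $2$ by~(\ref{distance_lexicographic}), whereas $d_\Gamma((g_1,h),y)=d_G(g_1,g_2)=\ell\geq 3$; thus every vertex of the $g_1$-fiber is strictly closer to $x$ and belongs to $W_{xy}$. Symmetrically, every vertex of the $g_2$-fiber has distance $\ell\geq 3$ to $x$ but distance at most $2$ to $y$, so no vertex of the $g_2$-fiber belongs to $W_{xy}$. It is precisely here that $\ell=2$ would fail, since a vertex $(g_1,h)$ with $h h_1\notin E(H)$ would then be equidistant from $x$ and $y$; this is why the case $\ell=2$ behaves differently and was settled separately.

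Finally I would assemble the count. Since $g_1\in W_{g_1g_2}$ while $g_2\notin W_{g_1g_2}$, the fibers lying entirely inside $W_{xy}$ are exactly those over the vertices of $W_{g_1g_2}$ (the fiber over $g_1$ by the special case, and the fibers over $W_{g_1g_2}\setminus\{g_1\}$ by the generic case), while all remaining fibers are disjoint from $W_{xy}$. As no fiber is split between the two, each contributing fiber has size $|V(H)|$, which yields $|W_{xy}|=|W_{g_1g_2}|\cdot|V(H)|$, as claimed. I expect no serious obstacle beyond the careful bookkeeping of the two special fibers described above.
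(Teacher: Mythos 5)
Your proof is correct and takes essentially the same approach as the paper's: both partition $V(\Gamma)$ by the $G$-coordinate, use $\ell\geq 3$ together with~(\ref{distance_lexicographic}) to place the whole $g_1$-fiber in $W_{xy}$ and the whole $g_2$-fiber in $W_{yx}$, and reduce the remaining fibers to the condition $g\in W_{g_1g_2}$. The only difference is that you make explicit the final bookkeeping (that $g_1\in W_{g_1g_2}$ and $g_2\notin W_{g_1g_2}$, so exactly the fibers over $W_{g_1g_2}$ contribute), which the paper leaves implicit.
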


\proof
It follows from the assumption $d_G(g_1,g_2) \geq 3$ and from~\eqref{distance_lexicographic} that for any $h\in V(H)$ we have $(g_1,h)\in W_{xy}$ and $(g_2,h)\in W_{yx}$. Furthermore, if $g\in V(G)\setminus\{g_1,g_2\}$, then $d_{\Gamma}(x,(g,h))=d_G(g_1,g)$ and 
$d_{\Gamma}(y,(g,h))=d_G(g_2,g)$. Hence, $(g,h)\in W_{xy}$ if and only if $g\in W_{g_1g_2}$.
\qed

The announced characterization now reads as follows. 

\begin{theorem}
\label{theorem_lex}
Let $\ell \geq 3$ and $G\neq K_1$. Then $G[H]$ is $\ell$-distance-balanced if and only if
$G$ is $\ell$-distance-balanced.
\end{theorem}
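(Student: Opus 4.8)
The plan is to leverage Lemma~\ref{lemma_lex} to reduce the $\ell$-distance-balancedness of $G[H]$ entirely to that of $G$ when $\ell\ge 3$. The key observation is that Lemma~\ref{lemma_lex} gives a clean formula $|W_{xy}|=|W_{g_1g_2}|\cdot|V(H)|$ valid for any pair $x=(g_1,h_1)$, $y=(g_2,h_2)$ of vertices of $\Gamma=G[H]$ with $d_G(g_1,g_2)\ge 3$, and this formula does not depend on the choices of $h_1$ and $h_2$ at all. This is exactly what makes the characterization so simple for $\ell\ge 3$, in contrast to the more delicate cases $\ell\in\{1,2\}$ where distances within a single $H$-layer (which are $1$ or $2$) interfere with the layer structure.

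First I would pin down precisely which pairs of vertices of $G[H]$ are at distance $\ell$. By the distance formula~\eqref{distance_lexicographic}, two distinct vertices $(g_1,h_1)$ and $(g_2,h_2)$ of $G[H]$ satisfy $d_{G[H]}((g_1,h_1),(g_2,h_2))=\ell$ with $\ell\ge 3$ if and only if $g_1\ne g_2$ and $d_G(g_1,g_2)=\ell$; the cases $g_1=g_2$ contribute only distances $1$ or $2$, hence are irrelevant for $\ell\ge 3$. In particular, $\diam(G[H])\ge\ell$ if and only if $\diam(G)\ge\ell$, so the two distance-balancedness conditions are posed on the same index range. Thus the vertices at distance $\ell$ in $G[H]$ are exactly those whose $G$-coordinates are at distance $\ell$ in $G$, with arbitrary $H$-coordinates.

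For the backward direction, suppose $G$ is $\ell$-distance-balanced. Take any $x=(g_1,h_1)$ and $y=(g_2,h_2)$ in $G[H]$ with $d_{G[H]}(x,y)=\ell$; by the above, $d_G(g_1,g_2)=\ell$. Applying Lemma~\ref{lemma_lex} twice, once as stated and once with the roles of $x$ and $y$ interchanged, gives
$$|W_{xy}|=|W_{g_1g_2}|\cdot|V(H)| \quad\text{and}\quad |W_{yx}|=|W_{g_2g_1}|\cdot|V(H)|.$$
Since $G$ is $\ell$-distance-balanced and $d_G(g_1,g_2)=\ell$, we have $|W_{g_1g_2}|=|W_{g_2g_1}|$, and multiplying by $|V(H)|$ yields $|W_{xy}|=|W_{yx}|$. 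As $x,y$ were arbitrary at distance $\ell$, the product $G[H]$ is $\ell$-distance-balanced.

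For the forward direction, suppose $G[H]$ is $\ell$-distance-balanced and take any $g_1,g_2\in V(G)$ with $d_G(g_1,g_2)=\ell$; I must show $|W_{g_1g_2}|=|W_{g_2g_1}|$. Fix an arbitrary $h\in V(H)$ and set $x=(g_1,h)$, $y=(g_2,h)$, which are at distance $\ell$ in $G[H]$. The hypothesis gives $|W_{xy}|=|W_{yx}|$, and Lemma~\ref{lemma_lex} (applied to each side) rewrites this as $|W_{g_1g_2}|\cdot|V(H)|=|W_{g_2g_1}|\cdot|V(H)|$. Dividing by the positive integer $|V(H)|$ gives $|W_{g_1g_2}|=|W_{g_2g_1}|$, as desired. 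I do not anticipate a serious obstacle here: the entire argument is essentially a bookkeeping reduction made possible by Lemma~\ref{lemma_lex}. The only points requiring care are verifying that the distance-$\ell$ pairs of $G[H]$ correspond precisely to distance-$\ell$ pairs of $G$ (so that no ``within-layer'' pairs are overlooked for $\ell\ge 3$) and confirming that the diameter conditions match so that both sides are genuinely making an $\ell$-distance-balancedness claim; the hypothesis $\ell\ge 3$ is exactly what rules out the troublesome within-layer distances of $1$ and $2$.
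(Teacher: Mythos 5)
Your proposal is correct and follows essentially the same route as the paper's proof: both directions are reduced to $\ell$-distance-balancedness of $G$ via Lemma~\ref{lemma_lex}, using the distance formula~\eqref{distance_lexicographic} to identify distance-$\ell$ pairs in $G[H]$ with distance-$\ell$ pairs in $G$ when $\ell\ge 3$. Your added remarks on the diameter matching and on excluding within-layer pairs are sound bookkeeping that the paper leaves implicit.
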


\proof
Suppose $\Gamma=G[H]$ is $\ell$-distance-balanced and let $g_1,g_2\in V(G)$ be vertices with 
$d_G(g_1,g_2)=\ell$. For arbitrary chosen vertices $h_1,h_2\in V(H)$  
we denote $x=(g_1,h_1)$ and $y=(g_2,h_2)$. Then we have
$$d_{\Gamma}(x,y)=d_{\Gamma}((g_1,h_1),(g_2,h_2))=d_G(g_1,g_2)=\ell\,,$$ 
and consequently $|W_{xy}|=|W_{yx}|$. Since the vertices $x$ and $y$
meet the conditions of Lemma~\ref{lemma_lex}, we get 
$$|W_{g_1g_2}|\cdot |V(H)|=|W_{g_2g_1}|\cdot |V(H)|$$ which implies
$|W_{g_1g_2}|=|W_{g_2g_1}|$ and therefore confirms that $G$ is $\ell$-distance-balanced.

Conversely, assume $G$ is $\ell$-distance-balanced and examine any pair
of vertices $x=(g_1,h_1)$ and $y=(g_2,h_2)$ in $\Gamma$ with $d_{\Gamma}(x,y)=\ell\geq 3$.
Then we have
$$\ell=d_{\Gamma}(x,y)=d_{\Gamma}((g_1,h_1),(g_2,h_2))=d_G(g_1,g_2),$$
where the last equality holds by distance formula~\eqref{distance_lexicographic}. 
Lemma~\ref{lemma_lex} then implies
$$|W_{xy}|=|W_{g_1g_2}|\cdot |V(H)|=|W_{g_2g_1}|\cdot |V(H)|=|W_{yx}|\,.$$
Thus, $\Gamma=G[H]$ is $\ell$-distance-balanced.
\qed

\section{On $\ell$-distance-balanced corona products}
\label{sec:corona}

The corona product of two arbitrary, nontrivial and connected graphs is not 
distance-balanced~\cite[Theorem 3]{tayo-2011}. This implies that the corona
product of graphs $G$ and $H$ is distance-balanced if and only if $G$ is 
trivial ($G\cong K_1$) and $H$ is a complete graph (complete graphs are 
distance-balanced). In this section we give a characterization of 
$\ell$-distance-balanced corona products for $\ell \ge 2$. Note that if $G$ is a connected graph on at least two vertices, then $\diam(G\circ H) = \diam (G) + 2$. Hence we wish to know whether $G\circ H$ is $\ell$-distance-balanced for every $\ell \in \{2,\ldots, \diam(G)+2\}$.

We first consider $2$-distance-balanced corona products, for which the following concept is useful. 
A graph $G$ is \textit{locally regular} if any non-adjacent vertices
of $G$ have the same degree. Note that every regular graph is locally regular and that the converse does not hold. For example, complete bipartite graphs $K_{m,n}$, $m\neq n$, and wheel graphs $W_n$, $n\geq 5$, are locally regular but not regular.

\begin{proposition}
\label{prop:2DB}
Let $G$ be a connected graph and let $H$ be a graph with $|V(H)|\geq 2$. Then $G\circ H$ is $2$-distance-balanced if and only if $G\cong K_1$ and $H$ is locally regular.
\end{proposition}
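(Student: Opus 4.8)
The plan is to prove both directions by explicitly understanding the structure of the corona product $G\circ H$ and computing the relevant $W$-sets for pairs of vertices at distance $2$. The key structural fact is that in $G\circ H$, each copy of $H$ sits ``below'' a single vertex $(g,0)$ of $G$, and the only neighbor of a vertex $(g,h)$ (with $h\in V(H)$) outside its own copy of $H$ is $(g,0)$. So distances involving copy-vertices are controlled by distances in $H$ together with the single attachment vertex. I would first record the basic distance formulas: for $h_1,h_2\in V(H)$, $d_{G\circ H}((g,h_1),(g,h_2)) = d_H(h_1,h_2)$ if this is $1$, and otherwise $2$ (going up to $(g,0)$ and back down); for distinct $g,g'$ the vertex $(g,h)$ reaches $(g',h')$ via $(g,0)\,\cdots\,(g',0)$, giving distance $d_G(g,g')+2$ when both $h,h'\in V(H)$.

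For the reverse direction, assume $G\cong K_1$ and $H$ is locally regular. Then $G\circ H$ consists of a single apex vertex $(g,0)$ joined to every vertex of one copy of $H$. The pairs at distance $2$ are exactly the pairs $(g,h_1),(g,h_2)$ with $h_1h_2\notin E(H)$, i.e. the non-adjacent vertices of $H$. For such a pair I would compute $W_{xy}$ and $W_{yx}$: the apex is equidistant (distance $1$ from each), and a third vertex $(g,h)$ is closer to $(g,h_1)$ precisely when $h$ is adjacent to $h_1$ but not to $h_2$. Counting these shows $|W_{xy}|$ and $|W_{yx}|$ differ by exactly $\deg_H(h_1)-\deg_H(h_2)$ (up to the symmetric treatment of the $h_1h_2$ pair themselves, which I would track carefully), so local regularity—which forces $\deg_H(h_1)=\deg_H(h_2)$ for the non-adjacent pair—yields the balance.

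For the forward direction I must show that $2$-distance-balancedness forces both $G\cong K_1$ and local regularity of $H$. Local regularity follows by reversing the computation above: any two non-adjacent vertices of $H$ give a distance-$2$ pair inside one copy, and balance forces their degrees equal. The harder part is forcing $G\cong K_1$; here I would suppose $|V(G)|\geq 2$ and exhibit a distance-$2$ pair that cannot be balanced. The natural candidate is to take $g\in V(G)$ with a neighbor, and compare a copy-vertex $(g,h)$ against the apex $(g',0)$ of an adjacent $g'$, these being at distance $2$; one side ``sees'' an entire copy of $H$ hanging off $g$ while the other does not, creating an unavoidable imbalance. I expect isolating this clean distance-$2$ pair and arguing the asymmetry rigorously to be the main obstacle, since one must account for all the other copies of $H$ and all vertices of $G$ and check they either cancel or reinforce the imbalance rather than accidentally restoring balance.

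Once both implications are assembled, the proposition follows. Throughout, the recurring technique is the same: fix a distance-$2$ pair, partition $V(G\circ H)$ according to which vertex of the pair is closer, and exploit that almost all vertices lie in ``symmetric'' positions so that the imbalance is concentrated in a small, explicitly computable set of vertices near the pair.
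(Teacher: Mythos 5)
Your proposal is correct and follows essentially the same route as the paper's proof: the same computation of $W_{xy}$ and $W_{yx}$ for non-adjacent pairs inside a single copy of $H$ (with the apex equidistant, so balance reduces to $\deg_H(h_1)=\deg_H(h_2)$), and the same distance-$2$ pair consisting of an apex $(g',0)$ and a copy-vertex $(g,h)$ with $gg'\in E(G)$ to force $G\cong K_1$, where the closed copy of $H$ at $g'$ gives one side at least $|V(H)|+1$ vertices while the other side is confined to the copy at $g$ and has at most $|V(H)|$. The ``main obstacle'' you flag is handled exactly as you anticipate: by the triangle inequality (using $gg'\in E(G)$), no vertex outside these two copies can be closer to $(g,h)$, so the imbalance cannot be restored.
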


\begin{proof}
Let $G\cong K_1$ and let $H$ be a locally regular graph. If $x,y$ are vertices of
$G\circ H$ with $d_{G\circ H}(x,y)=2$, then $x = (g,h_1)$ and $y =(g,h_2)$, 
where $d_H(h_1,h_2)\geq 2$. Hence, 
$W_{xy} = \{x\} \cup \{(g,h): h\in V(H), h_1h\in E(H), h_2 h \not \in E(H)\}$ and 
$W_{yx} = \{y\} \cup \{(g,h): h\in V(H), h_2h\in E(H), h_1 h \not \in E(H)\}$. The equality 
${\rm deg}(h_1) = {\rm deg}(h_2)$ then implies $|W_{xy}|=|W_{yx}|$.

Suppose now that $G\circ H$ is 2-distance-balanced and consider the vertices 
$x = (g_1,0)$ and $y = (g_2,h_2)$ for $g_1g_2 \in E(G)$ and $h_2 \in V(H)$.
Note that $d_{G\circ H}(x,y)=2$. Then $\{(g_1,h): h\in V(H) \cup \{0\}\}\subseteq W_{xy}$ 
and hence $|W_{xy}|\geq |V(H)|+1$. On the other hand we have 
$|W_{yx}| = |\{(g_2,h): h\in V(H), d_H(h,h_2)\le 1\}|\leq |V(H)|$. As this is not possible, we conclude that  $G\cong K_1$.

In the sequel, let $G = K_1$ and $V(G) = \{g\}$.
 Consider now the vertices  $x = (g,h_1)$ and $y = (g,h_2)$ of $G\circ H$ for $h_1,h_2 \in V(H)$ 
with $d_H(h_1,h_2) \ge 2$. Note that $d_{G\circ H}(x,y)=2$. 
Then $W_{xy} = \{x\} \cup \{(g,h):  h\in V(H), hh_1 \in E(H), hh_2 \not \in E(H)\}$ 
and similarly $W_{yx} = \{y\} \cup \{(g,h): h\in V(H), hh_2 \in E(H), hh_1 \not \in E(H)\}$. 
Since $|W_{xy}|=|W_{yx}|$, we conclude that $H$ is locally regular.
\qed
\end{proof}

Proposition~\ref{prop:2DB} immediately gives the following characterization of $2$-distance balanced graphs that contain a universal vertex, where a vertex $u$ of a graph $G$ is {\em universal} if its degree is $|V(G)|-1$. 

\begin{corollary}
Let $v$ be a universal vertex of a graph $G$. Then $G$ is $2$-distance-balanced if and only if $G-v$ is locally regular.  
\end{corollary}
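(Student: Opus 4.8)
The plan is to derive this corollary directly from Proposition~\ref{prop:2DB} by exhibiting $G$ as a corona product. The key observation is that a graph $G$ with a universal vertex $v$ is precisely a corona product $K_1 \circ (G-v)$: the single vertex of $K_1$ plays the role of $v$, and since $v$ is universal it is joined to every vertex of the copy $G-v$, which is exactly the adjacency pattern dictated by the corona construction. I would first verify this isomorphism explicitly, identifying $v$ with the vertex $(g,0)$ of $K_1 \circ (G-v)$ and each vertex $u \in V(G-v)$ with $(g,u)$; edges within $G-v$ and the edges from $v$ to all of $V(G-v)$ then correspond exactly to the edges of the corona product.

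Once this identification is in place, the corollary follows almost immediately. Applying Proposition~\ref{prop:2DB} with the first factor equal to $K_1$ and the second factor equal to $H = G-v$, we obtain that $K_1 \circ (G-v)$ is $2$-distance-balanced if and only if $G-v$ is locally regular, since the condition $G \cong K_1$ on the first factor is automatically satisfied. This yields the stated equivalence for $G$ itself via the isomorphism.

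The one point requiring care concerns the hypothesis $|V(H)| \geq 2$ in Proposition~\ref{prop:2DB}, which here reads $|V(G-v)| \geq 2$, i.e.\ $|V(G)| \geq 3$. I would handle the small cases separately: if $|V(G)| \le 2$ then $G$ is complete (a single universal vertex forces completeness on one or two vertices), hence trivially $2$-distance-balanced, while $G-v$ is either empty or a single vertex and is vacuously locally regular, so the equivalence still holds. An alternative, cleaner route is to simply reprove the equivalence directly using the structure of $W_{xy}$ and $W_{yx}$ for vertices at distance $2$ in $G$, mirroring the final paragraph of the proof of Proposition~\ref{prop:2DB}: two vertices of $G$ are at distance $2$ exactly when they are non-adjacent vertices of $G-v$ (the universal vertex $v$ being at distance $1$ from everything), and for such a pair the sets $W_{xy}$ and $W_{yx}$ consist of the respective vertex together with its private neighbors, so the balance condition reduces to equality of degrees in $G-v$.

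I expect the main obstacle to be merely bookkeeping rather than conceptual: one must confirm that the distance-$2$ pairs in $G$ correspond exactly to non-adjacent pairs in $G-v$ and that the universal vertex $v$ contributes equally (indeed, lies in the equidistant set $_{x}W_{y}$) for any such pair, so that it does not disturb the balance. Since $v$ is adjacent to both $x$ and $y$, it is at distance $1$ from each and hence belongs to neither $W_{xy}$ nor $W_{yx}$, confirming that the corona-product analysis transfers without modification.
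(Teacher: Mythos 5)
Your proposal is correct and follows exactly the route the paper intends: the paper states the corollary as an immediate consequence of Proposition~\ref{prop:2DB}, precisely via the identification $G \cong K_1 \circ (G-v)$ that you make explicit. Your additional care with the $|V(G)| \le 2$ edge case (where $G$ is complete and the distance-$2$ condition holds vacuously) is a detail the paper glosses over, but it does not change the substance of the argument.
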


Because of Proposition~\ref{prop:2DB} and since $\diam(K_1\circ H) \in \{1,2\}$, we are next interested only in corona products $G\circ H$, where $G$ is a connected graph of order at least $2$. 

\begin{lemma}
\label{thm:ndb}
Let $G$ be a connected graph with at least two vertices, $H$ a graph, and $3\le \ell \le \diam(G)+2$.  
Then $G\circ H$ is $\ell$-distance-balanced if and only if 
the following conditions are fulfilled. 
\begin{itemize}
\item[(i)] $G$ is $\ell$-distance-balanced, 
\item[(ii)] $G$ is $(\ell-2)$-distance-balanced, and
\item[(iii)] $|\{g \in V(G): d_G(g_1,g)+2\le d_G(g_2,g)\}| = |\{g \in V(G): d_G(g_2,g)\le d_G(g_1,g)\}|$
for every $g_1,g_2 \in V(G)$ with $d_G(g_1,g_2) = \ell-1$.
\end{itemize}
\end{lemma}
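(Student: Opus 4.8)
The plan is to first record the distance formulas in $\Gamma=G\circ H$ and then to split the vertex pairs at distance $\ell$ into three types according to whether each endpoint is a \emph{central} vertex $(g,0)$ or a \emph{pendant} vertex $(g,h)$ with $h\in V(H)$. Cutting a geodesic into its portion inside the central copy of $G$ and the at most two pendant edges at its ends gives, for all $g,g'\in V(G)$ and $h,h'\in V(H)$, that $d_\Gamma((g,0),(g',0))=d_G(g,g')$, that $d_\Gamma((g,0),(g',h'))=d_G(g,g')+1$, and that $d_\Gamma((g,h),(g',h'))=d_G(g,g')+2$ whenever $g\ne g'$, while two pendants over a common vertex are at distance $1$ or $2$. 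Hence the pairs at distance $\ell\ge 3$ are exactly: (A) two central vertices with $d_G(g_1,g_2)=\ell$; (B) one central and one pendant vertex with $d_G(g_1,g_2)=\ell-1$; and (C) two pendants over distinct vertices with $d_G(g_1,g_2)=\ell-2$. Since $\Gamma$ is $\ell$-distance-balanced precisely when every such pair is balanced, it suffices to analyse the three types separately.

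For a fixed pair $x,y$ I compute $W_{xy}$ column by column, the column over $g\in V(G)$ being $\{(g,h):h\in V(H)\cup\{0\}\}$, of size $|V(H)|+1$. In types (A) and (C) the pendant edge shifts the distance from any vertex of the column to $x$ and to $y$ by the same amount, so the whole column lies in $W_{xy}$ exactly when $g\in W_{g_1g_2}$ and in $W_{yx}$ exactly when $g\in W_{g_2g_1}$; the only columns needing separate inspection are those over $g_1$ and $g_2$ in type (C), where two pendants over a common vertex are at distance at most $2$, and these turn out to lie wholly in $W_{xy}$ and $W_{yx}$ respectively. Thus $|W_{xy}|=|W_{g_1g_2}|\,(|V(H)|+1)$ and $|W_{yx}|=|W_{g_2g_1}|\,(|V(H)|+1)$, so type (A) pairs are all balanced iff $G$ is $\ell$-distance-balanced (condition (i)), and type (C) pairs are all balanced iff $G$ is $(\ell-2)$-distance-balanced (condition (ii)).

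The heart of the argument, and the step I expect to be the main obstacle, is type (B), where $x=(g_1,0)$ is central and $y=(g_2,h_2)$ is a pendant, so the two ends behave asymmetrically: the pendant edge raises the distance to $y$ but not to $x$. Working out the column conditions shows that the whole column over $g$ lies in $W_{xy}$ iff $d_G(g,g_1)\le d_G(g,g_2)$ and in $W_{yx}$ iff $d_G(g,g_1)\ge d_G(g,g_2)+2$, the intermediate case $d_G(g,g_1)=d_G(g,g_2)+1$ giving an equidistant column; the column over $g_2$ is a boundary case to verify directly, but it obeys the same rule. This yields $|W_{xy}|=(|V(H)|+1)\,|\{g:d_G(g,g_1)\le d_G(g,g_2)\}|$ and $|W_{yx}|=(|V(H)|+1)\,|\{g:d_G(g,g_2)+2\le d_G(g,g_1)\}|$, so these pairs are balanced iff $|\{g:d_G(g,g_1)\le d_G(g,g_2)\}|=|\{g:d_G(g,g_2)+2\le d_G(g,g_1)\}|$. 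The mild subtlety is that this is exactly condition (iii) after interchanging the names $g_1$ and $g_2$; since $x$ ranges over all central vertices and $y$ over all pendants, the type (B) pairs range over all \emph{ordered} pairs at distance $\ell-1$, so requiring balance for all of them is equivalent to (iii) as stated.

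Finally I would dispose of the boundary of the range: type (A) is empty when $\ell>\diam(G)$ and type (B) when $\ell>\diam(G)+1$, in which cases conditions (i) and (iii) hold vacuously, there being no vertices of $G$ at the relevant distance, while type (C) is always present since $\ell-2\le\diam(G)$. Thus the stated equivalence persists throughout $3\le\ell\le\diam(G)+2$, and combining the three case analyses proves both implications of the lemma.
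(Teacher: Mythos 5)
Your proof is correct and follows essentially the same route as the paper's: a case analysis of pairs at distance $\ell$ according to whether each endpoint is a central vertex $(g,0)$ or a pendant vertex, with each case matched to one of the conditions (i)--(iii). You are somewhat more explicit than the paper (which treats the reverse implication as ``clear'') about the column counts with factor $|V(H)|+1$, the ordered-pair subtlety in condition (iii), and the vacuous boundary cases, but the underlying argument is the same.
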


\begin{proof}
Suppose that $G\circ H$ is $\ell$-distance-balanced. 
Consider vertices  $x = (g_1,h_1)$ and $y = (g_2,h_2)$ of $G\circ H$ with $d_{G\circ H}(x,y)=\ell$.
Then there are three cases to be considered. 

\medskip\noindent
{\bf Case 1.} $h_1=h_2=0$.\\
In this case we have $d_G(g_1,g_2) = \ell$.
For $z = (g_3,h_3) \in W_{xy}$ we have $d_G(g_1,g_3)<d_G(g_2,g_3)$ and similarly 
$z \in W_{yx}$ implies $d_G(g_1,g_3)>d_G(g_2,g_3)$. Since $|W_{xy}| = |W_{yx}|$, 
this means that $|W_{g_1g_2}| = |W_{g_2g_1}|$ and therefore $G$ is $\ell$-distance-balanced.

\medskip\noindent
{\bf Case 2.} $h_1\neq 0$ and $h_2 \neq 0$.\\
Now we have $d_G(g_1,g_2) = \ell-2$. 
If $z = (g_3,h_3) \in W_{xy}$, then $d_G(g_1,g_3)<d_G(g_2,g_3)$. Similarly, if
$z \in W_{yx}$, then $d_G(g_1,g_3)>d_G(g_2,g_3)$. Since $|W_{xy}| = |W_{yx}|$, this means that 
$|W_{g_1g_2}| = |W_{g_2g_1}|$ and therefore $G$ is $(\ell-2)$-distance-balanced.

\medskip\noindent
{\bf Case 3.} $h_1 \neq 0$ and $h_2 = 0$. \\
In this case, $d_G(g_1,g_2) = \ell-1$.
Again let $z = (g_3,h_3)$ be a vertex of $G\circ H$. If $z \in W_{xy}$, then $d_G(g_1,g_3)+1 <d_G(g_2,g_3)$. 
On the other hand, $z \in W_{yx}$ implies that $d_G(g_1,g_3)\geq d_G(g_2,g_3)$. Since $|W_{xy}| = |W_{yx}|$, 
it follows that $|\{g \in V(G): d_G(g_1,g)+2\le d_G(g_2,g)\}| = |\{g \in V(G): d_G(g_2,g)\le d_G(g_1,g)\}|$.

We have thus proved that if $G\circ H$ is $\ell$-distance-balanced, then (i), (ii), and (iii) hold. The reverse implication is clear.
\qed
\end{proof}

\begin{theorem}
\label{thm:ndb.char}
If $G$ is a connected graph with at least two vertices, and $H$ is a graph, then the following hold. 

(i) $G\circ H$ is $(\diam(G) +2)$-distance-balanced if and only if $G$ is $\diam(G)$-distance-balanced.

(ii) If $\ell \in \{3,\ldots, \diam(G)+1\}$, then $G\circ H$ is not $\ell$-distance-balanced. 
\end{theorem}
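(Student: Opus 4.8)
The plan is to derive both parts directly from Lemma~\ref{thm:ndb}, which for $3\le \ell \le \diam(G)+2$ reduces the $\ell$-distance-balancedness of $G\circ H$ to the three conditions (i)--(iii) imposed on $G$ alone.

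For part (i) I would set $\ell = \diam(G)+2$ and simply read off which of the lemma's conditions survive. Condition (i) of the lemma concerns pairs of $G$ at distance $\diam(G)+2$ and condition (iii) concerns pairs at distance $\diam(G)+1$; since $G$ has no pairs at either distance, both are vacuously satisfied. The only remaining requirement is condition (ii), that $G$ be $(\ell-2)$-distance-balanced, i.e.\ $\diam(G)$-distance-balanced, which is exactly the asserted equivalence.

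For part (ii) I would argue by contradiction: suppose $G\circ H$ is $\ell$-distance-balanced for some $\ell \in \{3,\ldots,\diam(G)+1\}$, so by the lemma condition (iii) holds for every ordered pair $g_1,g_2$ with $d_G(g_1,g_2)=\ell-1$ (and such pairs exist, as $\ell-1\le\diam(G)$). The crux is to apply (iii) to \emph{both} orderings of one fixed such pair. Writing $n_k$ for the number of vertices $g$ with $d_G(g_2,g)-d_G(g_1,g)=k$, condition (iii) for $(g_1,g_2)$ reads $\sum_{k\ge 2}n_k=\sum_{k\le 0}n_k$, while condition (iii) for $(g_2,g_1)$ reads $\sum_{k\le -2}n_k=\sum_{k\ge 0}n_k$. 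Adding these two identities and cancelling the common terms yields $2n_0+n_1+n_{-1}=0$, and since every $n_k\ge 0$ this forces $n_{-1}=n_0=n_1=0$; that is, no vertex $g$ of $G$ satisfies $|d_G(g_1,g)-d_G(g_2,g)|\le 1$.

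To close the argument I would exhibit a vertex violating this. Fixing a geodesic $g_1=v_0,v_1,\dots,v_{\ell-1}=g_2$ (which exists since $\ell-1\le\diam(G)$), along it $d_G(g_2,v_i)-d_G(g_1,v_i)=\ell-1-2i$ is an arithmetic progression with common difference $-2$ running from $\ell-1$ down to $-(\ell-1)$, hence symmetric about $0$ and necessarily meeting $\{-1,0,1\}$: it attains $0$ when $\ell$ is odd and $\pm 1$ when $\ell$ is even. This geodesic vertex contradicts $n_{-1}=n_0=n_1=0$, finishing the proof. The only genuinely nonroutine step is recognizing that condition (iii) \emph{by itself} already rules out $\ell$-distance-balancedness in the middle range, and in particular the symmetric double application of (iii) that collapses the three central counts $n_{-1},n_0,n_1$ to zero; the subsequent parity observation about distances along a geodesic is elementary.
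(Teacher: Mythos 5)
Your proof is correct and takes essentially the same route as the paper: part (i) by reading off which conditions of Lemma~\ref{thm:ndb} are non-vacuous when $\ell=\diam(G)+2$, and part (ii) by applying condition (iii) of that lemma to \emph{both} orderings of a pair at distance $\ell-1$, forcing the three central counts to vanish, and contradicting this by a parity argument along a geodesic. If anything, your explicit double application of (iii) is slightly more careful than the paper's Claim, which is stated for a single ordered pair but whose proof implicitly invokes the equality for both orderings.
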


\begin{proof}
(i) If $x = (g_1,h_1)$ and $y = (g_2,h_2)$ are vertices of $G\circ H$ with $d_{G\circ H}(x,y) = (\diam(G) +2)$, then $h_1\neq 0$ and $h_2 \neq 0$. Hence we only need to consider Case~2 of Lemma~\ref{thm:ndb} which implies the assertion (i).

(ii) Let $\ell \in \{3,\ldots, \diam(G)+1\}$. To prove that $G\circ H$ is not $\ell$-distance-balanced, in view of Lemma~\ref{thm:ndb} it suffices to prove the following: 

\medskip\noindent
{\bf Claim}: If  $X$ is a connected graph and $u,v \in V(X)$ with $d_X(u,v)=k\ge 2$, then 
$$|\{x \in V(X): d_X(u,x) +2 \leq d_X(v,x)\}| \ne |\{x \in V(X): d_X(v,x) \leq d_X(u,x)\}\,.$$
Consider the following sets:
\begin{align*}
U_2 & = \{x \in V(X): d_G(u,x)  \le d_G(v,x) - 2\}\,, \\
U_1 & = \{x \in V(G): d_G(u,x)  = d_G(v,x) - 1\}\,, \\
E & = \{x \in V(G): d_G(u,x) = d_G(v,x)\}\,, \\
V_1 & = \{x \in V(G): d_G(u,x) = d_G(v,x) + 1\}\,, \\
V_2 & = \{x \in V(G): d_G(u,x) \ge d_G(x,x) + 2\}\,. 
\end{align*}

Clearly, every vertex of $X$ is contained in exactly one of the above sets. By way of contradiction suppose that the equality holds in the displayed formula of the claim. Then 
$|U_2| = |E| + |V_1| + |V_2|$ and $|V_2| = |E| + |U_1| + |U_2|$.
It follows that $|E| = |U_1| = |V_1| = 0$. Consider a shortest $u,v$-path $P$. 
If $k$ is even, then $P$ contains a vertex $x$  such that $d_X(u,x) = d_X(v,x)$.  
This means that $x \in E$, and so $|E| \neq 0$. Consequently $k$ must be odd.
But if $k$ is odd, then there exist vertices $x$ and $y$ on $P$ such that 
$d_X(u,x)=d_X(v,x)-1$ and similarly $d_X(u,y)=d_X(v,y)+1$, which implies that $x \in U_1$ 
and $y \in V_1$.  This contradiction proves the claim which in turn yields (ii).
\qed
\end{proof}

\section{On $\ell$-distance-balanced Cartesian products}
\label{sec:cartesian}

As already explained, $1$-distance-balanced and $2$-distance-balanced Cartesian product graphs were characterized in~\cite{jeklara-2008} and~\cite{fremi-2018}, respectively. As the general case seems difficult, we reduce here our attention to the case where one factor is complete. In the following lemma we first analyze and present the conditions for $x,y\in V(K_n\cp H)$ under which the vertices of $K_n\cp H$ are contained in $W_{xy}$. 

\begin{lemma}
\label{lemma_cart}
Let $x=(g_1,h_1)$ and $y=(g_2,h_2)$ be arbitrary vertices of $\Gamma=K_n\cp H$, $n\geq 2$. 
Then the following holds:
\begin{enumerate}
\item[(i)] If $x$ and $y$ are contained in the same $H$-layer ($g_1=g_2$), 
      then the set $W_{xy}$ contains exactly the vertices $z=(g,h)\in \Gamma$ 
			for which $h\in W_{h_1h_2}$.
\item[(ii)] If $x$ and $y$ are not contained in the same $H$-layer ($g_1\neq g_2$) and $z=(g,h)$ is a vertex of $\Gamma$ contained in
      \begin{itemize}
			\item ${}^{g_1}\hspace{-1mm}H$, then $z\in W_{xy}\iff h\in(W_{h_1h_2}\cup {}_{h_1}\hspace{-1mm}W_{h_2})$.
			\item ${}^{g_2}\hspace{-1mm}H$, then $z\in W_{xy}\iff h\in W_{h_1h_2}$ and $d_H(h_1,h)\neq d_H(h_2,h)-1$.
			\item $({}^{g_1}\hspace{-1mm}H\cup {}^{g_2}\hspace{-1mm}H)^c$, then $z\in W_{xy}\iff h\in W_{h_1h_2}$.
			\end{itemize}	
\end{enumerate}
\end{lemma}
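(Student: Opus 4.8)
The plan is to compute, for each vertex $z=(g,h)$ of $\Gamma=K_n\cp H$, the two distances $d_\Gamma(x,z)$ and $d_\Gamma(y,z)$ using the Cartesian distance formula~\eqref{distance_cartesian}, and then to decide membership in $W_{xy}$ by comparing them. Since $G=K_n$, the only possible values of $d_{K_n}(g_i,g)$ are $0$ (when $g=g_i$) and $1$ (when $g\neq g_i$), so the analysis splits naturally according to which $H$-layer the vertex $z$ lies in. This is exactly the case split announced in the statement. First I would record the three basic distances: $d_\Gamma(x,z)=d_{K_n}(g_1,g)+d_H(h_1,h)$ and $d_\Gamma(y,z)=d_{K_n}(g_2,g)+d_H(h_2,h)$.

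For part~(i), with $g_1=g_2$, the $K_n$-contributions to $d_\Gamma(x,z)$ and $d_\Gamma(y,z)$ are identical for every $g$ (both equal $0$ if $g=g_1$ and both equal $1$ otherwise). Hence $d_\Gamma(x,z)<d_\Gamma(y,z)$ if and only if $d_H(h_1,h)<d_H(h_2,h)$, i.e.\ if and only if $h\in W_{h_1h_2}$, which is the claim. For part~(ii), with $g_1\neq g_2$, I would handle the three sublayers separately. If $z\in {}^{g_1}H$ then $d_{K_n}(g_1,g)=0$ and $d_{K_n}(g_2,g)=1$, so the inequality $d_\Gamma(x,z)<d_\Gamma(y,z)$ becomes $d_H(h_1,h)<d_H(h_2,h)+1$, equivalently $d_H(h_1,h)\le d_H(h_2,h)$, which is precisely $h\in W_{h_1h_2}\cup {}_{h_1}\hspace{-1mm}W_{h_2}$. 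Symmetrically, if $z\in {}^{g_2}H$ then the $K_n$-term favors $y$, and the inequality reads $d_H(h_1,h)+1<d_H(h_2,h)$, i.e.\ $d_H(h_1,h)\le d_H(h_2,h)-2$; rewriting this as ``$h\in W_{h_1h_2}$ and $d_H(h_1,h)\neq d_H(h_2,h)-1$'' reproduces the stated condition. Finally, if $z\in ({}^{g_1}H\cup {}^{g_2}H)^c$, then $d_{K_n}(g_1,g)=d_{K_n}(g_2,g)=1$, the $K_n$-contributions again cancel, and membership reduces to $d_H(h_1,h)<d_H(h_2,h)$, i.e.\ $h\in W_{h_1h_2}$, as asserted.

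The only genuinely delicate point is the ${}^{g_2}H$ case, where I must translate the strict integer inequality $d_H(h_1,h)\le d_H(h_2,h)-2$ into the slightly awkward form used in the statement. The key observation making this routine is that all distances are integers, so $d_H(h_1,h)<d_H(h_2,h)$ (that is, $h\in W_{h_1h_2}$) together with the exclusion of the boundary value $d_H(h_1,h)=d_H(h_2,h)-1$ is logically equivalent to $d_H(h_1,h)\le d_H(h_2,h)-2$. I would make this integrality remark explicit once and then apply it, after which each of the four subcases is a one-line comparison. I expect no serious obstacle: the entire argument rests on the product distance formula and the fact that distances in $K_n$ take only the values $0$ and $1$, so the $K_n$-coordinate shifts the $H$-distance comparison by at most one in the appropriate direction.
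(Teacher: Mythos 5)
Your proposal is correct and follows essentially the same route as the paper's proof: both specialize the Cartesian distance formula to $K_n$ (where the $G$-coordinate contributes only $0$ or $1$), split into the same layer cases, and use integrality of distances to rewrite $d_H(h_1,h)\le d_H(h_2,h)-2$ in the form stated for the ${}^{g_2}\hspace{-1mm}H$ case. No gaps; your explicit integrality remark is the same observation the paper uses implicitly.
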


\proof
Note that for a complete graph $G$ the distance formula~\eqref{distance_cartesian} 
can be simplified as $d_{G\cp H}((g_1,h_1),(g_2,h_2))=\delta_{g_1,g_2}+d_H(h_1,h_2)$,
where $\delta_{g_1,g_2}$ is 0 or 1 depending on whether $g_1=g_2$ or not, respectively.

For a vertex $z=(g,h)$ of $\Gamma$ we have
\begin{eqnarray*}
z\in W_{xy}  & \iff & \delta_{g_1,g}+d_H(h_1,h)<\delta_{g_2,g}+d_H(h_2,h)\,.
\end{eqnarray*} 
If $x$ and $y$ are contained in the same $H$-layer, we obtain $z\in W_{xy}$ if and only if $h\in W_{h_1h_2}$. 
Thus, $(i)$ follows.

Suppose now that $x$ and $y$ are not contained in the same $H$-layer. 
For $z\in({}^{g_1}\hspace{-1mm}H\cup {}^{g_2}\hspace{-1mm}H)^c$ we have $\delta_{g_1,g}=\delta_{g_2,g}=1$
and therefore $z\in W_{xy}$ if and only if $h\in W_{h_1h_2}$. 
For $z\in {}^{g_1}\hspace{-1mm}H$ we have $\delta_{g_1,g}=0$ and $\delta_{g_2,g}=1$ and hence 
$z\in W_{xy}$ if and only if $ h\in (W_{h_1h_2}\cup {}_{h_1}\hspace{-1mm}W_{h_2})$.
Finally, let $z\in {}^{g_2}\hspace{-1mm}H$. Then $\delta_{g_1,g}=1$ and $\delta_{g_2,g}=0$ which
implies $z\in W_{xy}$ if and only if $h\in W_{h_1h_2}$ and $d_H(h_1,h)\neq d_H(h_2,h)-1$.
This completes the proof of $(ii)$.
\qed

\begin{theorem}
\label{theorem_cart1}
Let $n\geq 2$, $\ell \geq 2$, and let $H$ be $\ell$-distance-balanced and $(\ell-1)$-distance-balanced 
graph. Then $K_n\cp H$ is $\ell$-distance-balanced if and only if
\begin{eqnarray}
\label{pogoj_cp}
|\{h\in W_{h_1h_2}:\ d(h_1,h) = d(h_2,h)-1\}| = |\{h\in W_{h_2h_1}:\ d(h_2,h) = d(h_1,h)-1\}|
\end{eqnarray}
for every $h_1,h_2\in V(H)$ with $d_H(h_1,h_2)=\ell-1$.
\end{theorem}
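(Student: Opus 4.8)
The plan is to characterize $\ell$-distance-balancedness of $K_n \cp H$ by counting, for a pair of vertices $x=(g_1,h_1)$, $y=(g_2,h_2)$ at distance $\ell$, the difference $|W_{xy}| - |W_{yx}|$ layer by layer, using Lemma~\ref{lemma_cart}. Since $n\geq 2$ and $\diam(K_n)=1$, two vertices at distance $\ell\geq 2$ can only satisfy $d_\Gamma(x,y)=\ell$ in one of two ways: either $g_1=g_2$ (so $d_H(h_1,h_2)=\ell$), or $g_1\neq g_2$ (so $d_H(h_1,h_2)=\ell-1$). I will show the first case is automatically balanced and contributes nothing, while the second case is balanced precisely when~\eqref{pogoj_cp} holds; the theorem then follows because the hypotheses that $H$ is $\ell$- and $(\ell-1)$-distance-balanced are exactly what is needed to handle those two distances in $H$.

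First I would dispose of the case $g_1=g_2$. Here $d_H(h_1,h_2)=\ell$, and by Lemma~\ref{lemma_cart}(i) the vertices of $W_{xy}$ are exactly the $(g,h)$ with $h\in W_{h_1h_2}$, so $|W_{xy}| = n\cdot|W_{h_1h_2}|$ and likewise $|W_{yx}| = n\cdot|W_{h_2h_1}|$. Since $H$ is $\ell$-distance-balanced and $d_H(h_1,h_2)=\ell$, we have $|W_{h_1h_2}|=|W_{h_2h_1}|$, hence $|W_{xy}|=|W_{yx}|$ with no further condition. Next I would handle the substantive case $g_1\neq g_2$, where $d_H(h_1,h_2)=\ell-1$. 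Using the three bullet points of Lemma~\ref{lemma_cart}(ii), I split $V(\Gamma)$ into the layers $^{g_1}\!H$, $^{g_2}\!H$, and the remaining $n-2$ layers. The outside layers contribute $(n-2)|W_{h_1h_2}|$ to $|W_{xy}|$ and $(n-2)|W_{h_2h_1}|$ to $|W_{yx}|$; by $(\ell-1)$-distance-balancedness of $H$ these cancel. The layer $^{g_1}\!H$ contributes $|W_{h_1h_2}| + |{}_{h_1}\!W_{h_2}|$ to $|W_{xy}|$ (and symmetrically $|W_{h_2h_1}|+|{}_{h_2}\!W_{h_1}|$ to $|W_{yx}|$, noting $_{h_1}\!W_{h_2}={}_{h_2}\!W_{h_1}$), while $^{g_2}\!H$ contributes $|\{h\in W_{h_1h_2}: d_H(h_1,h)\neq d_H(h_2,h)-1\}|$ to $|W_{xy}|$.

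The key computation is then to assemble $|W_{xy}|-|W_{yx}|$ from these pieces. After cancelling the outer-layer and $W_{h_1h_2}$-vs-$W_{h_2h_1}$ terms (again via $(\ell-1)$-distance-balancedness) and the common equidistant term $|{}_{h_1}\!W_{h_2}|$, the surviving discrepancy reduces exactly to the difference between $|\{h\in W_{h_1h_2}: d_H(h_1,h)=d_H(h_2,h)-1\}|$ and its symmetric counterpart $|\{h\in W_{h_2h_1}: d_H(h_2,h)=d_H(h_1,h)-1\}|$, which is the quantity in~\eqref{pogoj_cp}. The main obstacle, and the step requiring care, is the bookkeeping in the $^{g_2}\!H$ layer: the condition $z\in W_{xy}$ there is $h\in W_{h_1h_2}$ \emph{together with} $d_H(h_1,h)\neq d_H(h_2,h)-1$, so I must correctly subtract off the set $\{h\in W_{h_1h_2}: d_H(h_1,h)=d_H(h_2,h)-1\}$ and verify that the symmetric subtraction occurs in $|W_{yx}|$. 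Getting the signs and the symmetric matching right is precisely what produces~\eqref{pogoj_cp}; once that is verified, $|W_{xy}|=|W_{yx}|$ holds for all such pairs if and only if~\eqref{pogoj_cp} holds for every $h_1,h_2$ at distance $\ell-1$, completing both directions of the equivalence.
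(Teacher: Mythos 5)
Your proposal is correct and follows essentially the same route as the paper: both use Lemma~\ref{lemma_cart} to split into the cases $g_1=g_2$ (handled by $\ell$-distance-balancedness of $H$) and $g_1\neq g_2$ (layer-by-layer count giving $|W_{xy}| = n|W_{h_1h_2}| + |{}_{h_1}\hspace{-1mm}W_{h_2}| - |\{h\in W_{h_1h_2}: d_H(h_1,h)=d_H(h_2,h)-1\}|$, with the $(\ell-1)$-distance-balancedness hypothesis cancelling everything except the two terms in~\eqref{pogoj_cp}). Your bookkeeping in the ${}^{g_2}\hspace{-1mm}H$ layer, including the symmetry ${}_{h_1}\hspace{-1mm}W_{h_2}={}_{h_2}\hspace{-1mm}W_{h_1}$, matches the paper's computation exactly, and the converse is obtained in both cases by reading the same identity backwards for a pair $x=(g_1,h_1)$, $y=(g_2,h_2)$ with $g_1\neq g_2$.
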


\proof
Assume first that $H$ meets the condition~\eqref{pogoj_cp} of the theorem and let $x=(g_1,h_1)$ and 
$y=(g_2,h_2)$ be arbitrary vertices of $\Gamma=K_n\cp H$ with $d_{\Gamma}(x,y)=\ell$. 
Note that for $g_1=g_2$ we have $\ell=d_{\Gamma}(x,y)=d_H(h_1,h_2)$. Moreover, 
Lemma~\ref{lemma_cart} implies that $|W_{xy}|=n\cdot|W_{h_1h_2}|$ and 
$|W_{yx}|=n\cdot|W_{h_2h_1}|$. Considering that $H$ is $\ell$-distance-balanced, 
we can conclude, that $|W_{xy}|=|W_{yx}|$. Suppose now that $g_1\neq g_2$. Then
$\ell=d_{\Gamma}(x,y)=1+d_H(h_1,h_2)$ and hence $d_H(h_1,h_2)=\ell-1$. Since $H$ is
$(\ell-1)$-distance-balanced and satisfies the condition~\eqref{pogoj_cp}, 
Lemma~\ref{lemma_cart} implies that 
\begin{eqnarray*}
|W_{xy}| & = & n\cdot|W_{h_1h_2}|+|{}_{h_1}\hspace{-1mm}W_{h_2}| - |\{h\in W_{h_1h_2}:\ d_H(h_1,h) = d_H(h_2,h)-1\}|\\ 
         & = & n\cdot|W_{h_2h_1}|+|{}_{h_1}\hspace{-1mm}W_{h_2}| - |\{h\in W_{h_2h_1}:\ d_H(h_2,h) = d_H(h_1,h)-1\}|\\ 
				 & = & |W_{yx}|\,.
\end{eqnarray*}
Therefore, $\Gamma$ is $\ell$-distance-balanced.

For the converse let $h_1,h_2$ be any vertices of $V(H)$ with $d_H(h_1,h_2)=\ell-1$. 
Consider now the vertices $x=(g_1,h_1)$ and $y=(g_2,h_2)$ of $K_n\cp H$ with $g_1\neq g_2$.
Then by Lemma~\ref{lemma_cart} the equality~\eqref{pogoj_cp} holds.
\qed

We next show a necessary condition for $K_n\cp H$ to be $\ell$-distance-balanced. 

\begin{proposition}
\label{proposition_cart2}
Let $H$ be a graph of diameter at least $\ell\geq 2$ and let $n\geq 1$. If the 
Cartesian product $K_n\cp H$ is $\ell$-distance-balanced, then $H$ is 
$\ell$-distance-balanced.
\end{proposition}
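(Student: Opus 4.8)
The plan is to prove the contrapositive in the most direct way possible. Suppose $H$ is \emph{not} $\ell$-distance-balanced: there exist vertices $h_1,h_2 \in V(H)$ with $d_H(h_1,h_2) = \ell$ but $|W_{h_1h_2}| \neq |W_{h_2h_1}|$. First I would handle the trivial case $n = 1$, where $K_1 \cp H \cong H$, so the claim is immediate. For $n \geq 2$, the natural choice is to place both witnesses in the \emph{same} $H$-layer, i.e.\ set $x = (g_1, h_1)$ and $y = (g_1, h_2)$ with a common first coordinate $g_1 \in V(K_n)$. By the simplified distance formula $d_\Gamma(x,y) = \delta_{g_1,g_1} + d_H(h_1,h_2) = d_H(h_1,h_2) = \ell$, so $x$ and $y$ are a legitimate pair at distance $\ell$ in $\Gamma = K_n \cp H$.

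The key step is then to invoke Lemma~\ref{lemma_cart}(i): since $x$ and $y$ lie in the same $H$-layer, $W_{xy}$ consists of exactly those $z = (g,h)$ with $h \in W_{h_1h_2}$, and these range over all $g \in V(K_n)$. Hence $|W_{xy}| = n \cdot |W_{h_1h_2}|$, and symmetrically $|W_{yx}| = n \cdot |W_{h_2h_1}|$. Because $K_n \cp H$ is assumed $\ell$-distance-balanced and $d_\Gamma(x,y) = \ell$, we have $|W_{xy}| = |W_{yx}|$, which gives $n \cdot |W_{h_1h_2}| = n \cdot |W_{h_2h_1}|$. Dividing by $n$ yields $|W_{h_1h_2}| = |W_{h_2h_1}|$, contradicting our choice of $h_1, h_2$.

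There is essentially no hard part here, which is expected since this is a one-directional necessary condition rather than a full characterization. The only points requiring care are bookkeeping ones: confirming that a pair at distance $\ell$ in $\Gamma$ actually exists (guaranteed by the hypothesis $\diam(H) \geq \ell$, so that some $h_1, h_2$ with $d_H(h_1,h_2) = \ell$ exist), and cleanly dispatching the degenerate $n = 1$ case before applying Lemma~\ref{lemma_cart}, which is stated for $n \geq 2$. The conceptual content is simply that confining both vertices to one layer reduces the balance condition in the product directly to the balance condition in $H$, scaled uniformly by the factor $n$, so any imbalance in $H$ survives into the product.
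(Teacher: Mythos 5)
Your proof is correct and takes essentially the same route as the paper: both arguments place the two witness vertices $(g,h_1)$ and $(g,h_2)$ in a single $H$-layer, apply Lemma~\ref{lemma_cart}(i) to obtain $|W_{xy}| = n\cdot|W_{h_1h_2}|$ and $|W_{yx}| = n\cdot|W_{h_2h_1}|$, and cancel the factor $n$; your contrapositive phrasing is only a cosmetic difference. If anything, your explicit dispatch of the $n=1$ case is slightly more careful than the paper's proof, since Lemma~\ref{lemma_cart} is stated only for $n\geq 2$.
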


\proof
Suppose $\Gamma=K_n\cp H$ is $\ell$-distance-balanced. Let $h_1$ and $h_2$ be 
arbitrary vertices of $H$ with $d_H(h_1,h_2)=\ell$ and let $g$ be any vertex of $K_n$. 
Then $\ell=d_H(h_1,h_2)=d_{\Gamma}((g,h_1),(g,h_2))$. Since $\Gamma$ is $\ell$-distance-balanced 
we have $|W_{(g,h_1)(g,h_2)}|=|W_{(g,h_2)(g,h_1)}|$. Using Lemma~\ref{lemma_cart} we
derive that $n\cdot|W_{h_1h_2}|=n\cdot|W_{h_2h_1}|$ whence it follows that 
$|W_{h_1h_2}|=|W_{h_2h_1}|$. Therefore, $H$ is $\ell$-distance-balanced graph. 
\qed

From Lemma~\ref{lemma_cart}, Theorem~\ref{theorem_cart1}, and Proposition~\ref{proposition_cart2} we can deduce: 

\begin{corollary}
\label{cor_cart_2}
Let $H$ be a graph and let $n \geq 2$. Then $K_n\cp H$ is $2$-distance-balanced 
if and only if $H$ is a $2$-distance-balanced and $1$-distance-balanced graph.
\end{corollary}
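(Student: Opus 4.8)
The plan is to derive Corollary~\ref{cor_cart_2} by specializing the results already established for general $\ell$ to the case $\ell=2$, checking that the extra condition from Theorem~\ref{theorem_cart1} becomes vacuous here. The statement is an \emph{if and only if}, so I would handle the two directions separately, drawing the forward direction from Proposition~\ref{proposition_cart2} together with Corollary~\ref{cor_cart_2}'s own lower-$\ell$ content, and the reverse direction from Theorem~\ref{theorem_cart1}.

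For the reverse implication, suppose $H$ is both $2$-distance-balanced and $1$-distance-balanced. The idea is to invoke Theorem~\ref{theorem_cart1} with $\ell=2$: its hypotheses ask precisely that $H$ be $\ell$-distance-balanced and $(\ell-1)$-distance-balanced, i.e.\ $2$- and $1$-distance-balanced, which is exactly what we assume. Theorem~\ref{theorem_cart1} then reduces the claim ``$K_n\cp H$ is $2$-distance-balanced'' to verifying condition~\eqref{pogoj_cp} for all pairs $h_1,h_2$ with $d_H(h_1,h_2)=\ell-1=1$. The key observation is that when $h_1,h_2$ are \emph{adjacent} in $H$, no vertex $h$ can satisfy $d_H(h_1,h)=d_H(h_2,h)-1$ while simultaneously lying in $W_{h_1h_2}$ in a way that contributes asymmetrically; more carefully, I would argue that for adjacent $h_1,h_2$ the two sets counted in~\eqref{pogoj_cp} are empty or are forced to have equal cardinality by the $1$-distance-balancedness of $H$. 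This is the step I expect to require the most care: I would trace through the distance relations for $d_H(h_1,h_2)=1$ and show the condition~\eqref{pogoj_cp} holds automatically, so that Theorem~\ref{theorem_cart1} yields $2$-distance-balancedness of $K_n\cp H$.

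For the forward implication, assume $K_n\cp H$ is $2$-distance-balanced. Proposition~\ref{proposition_cart2}, applied with $\ell=2$ (and using $n\ge 2$, so its hypothesis $n\ge 1$ is met and $\diam(H)\ge 2$ follows since $\diam(K_n\cp H)\ge 2$ forces a pair at distance $2$ within an $H$-layer), immediately gives that $H$ is $2$-distance-balanced. It remains to show $H$ is also $1$-distance-balanced. Here I would pick adjacent vertices $h_1,h_2\in V(H)$ and two distinct vertices $g_1,g_2$ of $K_n$, and consider $x=(g_1,h_1)$, $y=(g_2,h_2)$, which satisfy $d_\Gamma(x,y)=1+d_H(h_1,h_2)=2$. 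The equality $|W_{xy}|=|W_{yx}|$ supplied by $2$-distance-balancedness of $\Gamma$, unpacked via the three cases of Lemma~\ref{lemma_cart}(ii), expands into a relation among $|W_{h_1h_2}|$, $|W_{h_2h_1}|$, $|{}_{h_1}W_{h_2}|$, and correction terms; the corrections vanish or cancel for adjacent $h_1,h_2$, leaving $|W_{h_1h_2}|=|W_{h_2h_1}|$, i.e.\ $H$ is $1$-distance-balanced.

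The main obstacle, as flagged above, is confirming that condition~\eqref{pogoj_cp} is automatically satisfied (and dually that the correction terms cancel in the converse) precisely when $d_H(h_1,h_2)=1$. I would reduce this to the elementary fact that for adjacent $h_1,h_2$, any vertex $h$ has $|d_H(h_1,h)-d_H(h_2,h)|\le 1$, so the simultaneous requirements ``$h\in W_{h_1h_2}$ and $d_H(h_1,h)=d_H(h_2,h)-1$'' cannot both hold; this forces both sets in~\eqref{pogoj_cp} to be empty. Once that bookkeeping is settled, both directions follow cleanly from the already-proved Theorem~\ref{theorem_cart1} and Proposition~\ref{proposition_cart2}, and the corollary is complete.
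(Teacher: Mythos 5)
Your overall strategy is the same as the paper's (Theorem~\ref{theorem_cart1} for the ``if'' direction, Proposition~\ref{proposition_cart2} plus Lemma~\ref{lemma_cart} for the ``only if'' direction), but the step you yourself flagged as the crux is resolved exactly backwards, and this is a genuine error. You claim that for adjacent $h_1,h_2$ the bound $|d_H(h_1,h)-d_H(h_2,h)|\le 1$ means that ``$h\in W_{h_1h_2}$ and $d_H(h_1,h)=d_H(h_2,h)-1$'' cannot both hold, so that both sets in~\eqref{pogoj_cp} are empty. The truth is the opposite: if $h\in W_{h_1h_2}$, i.e.\ $d_H(h_1,h)<d_H(h_2,h)$, then the same bound forces $d_H(h_1,h)=d_H(h_2,h)-1$. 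So for $d_H(h_1,h_2)=1$ the two conditions are \emph{equivalent}, and the sets in~\eqref{pogoj_cp} are not empty but are precisely $W_{h_1h_2}$ and $W_{h_2h_1}$. Consequently condition~\eqref{pogoj_cp} for $\ell=2$ is not vacuous; it is literally the statement that $H$ is $1$-distance-balanced. This identification is the paper's one-line argument for the ``if'' direction, and it is what your hedge in the second paragraph (``empty or forced to have equal cardinality'') should have been resolved to.

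The same mistake propagates into your ``only if'' direction. When you unpack Lemma~\ref{lemma_cart}(ii) for $x=(g_1,h_1)$, $y=(g_2,h_2)$ with $g_1\ne g_2$ and $h_1h_2\in E(H)$, the layer ${}^{g_2}\hspace{-1mm}H$ contributes \emph{nothing} to $W_{xy}$ (every $h\in W_{h_1h_2}$ satisfies the excluded equality $d_H(h_1,h)=d_H(h_2,h)-1$), so the correct count is
$|W_{xy}|=(n-1)|W_{h_1h_2}|+|{}_{h_1}\hspace{-1mm}W_{h_2}|$, not $n|W_{h_1h_2}|+|{}_{h_1}\hspace{-1mm}W_{h_2}|$ as your emptiness claim would give. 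It so happens that either count cancels against the corresponding expression for $|W_{yx}|$ and still yields $|W_{h_1h_2}|=|W_{h_2h_1}|$, so the corollary's conclusion survives your error by coincidence; but as written your proof rests on a false ``elementary fact'' at its central step, and the bookkeeping you promise to carry out would, if done honestly, contradict that claim. Replace the emptiness assertion by the equivalence above and both directions close exactly as in the paper.
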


\proof
Assume first that $H$ is $2$-distance-balanced and $1$-distance-balanced graph.
Let $h_1$ and $h_2$ be any adjacent vertices of $H$. Then the condition~\eqref{pogoj_cp}
of Theorem~\ref{theorem_cart1} coincides with $1$-distance-balancedness of $H$ which
implies that $K_n\cp H$ is $2$-distance-balanced.

Suppose now that $\Gamma=K_n\cp H$ is $2$-distance-balanced graph. According to 
Proposition~\ref{proposition_cart2} then also $H$ is $2$-distance-balanced. 
It remains to show that in addition $H$ is $1$-distance-balanced. Let 
$h_1,h_2\in V(H)$ be adjacent vertices, and let $g_1$ and $g_2$ be different vertices 
of $K_n$. Consider now the vertices $x=(g_1,h_1)$ and $y=(g_2,h_2)$ of $\Gamma$.
By Lemma~\ref{lemma_cart} we obtain
$$|W_{xy}|=(n-1)|W_{h_1h_2}|+|{}_{h_1}\hspace{-1mm}W_{h_2}|$$ and
$$|W_{yx}|=(n-1)|W_{h_2h_1}|+|{}_{h_1}\hspace{-1mm}W_{h_2}|\,.$$
Since $d_{\Gamma}(x,y)=2$ and $\Gamma$ is $2$-distance-balanced we have $|W_{xy}|=|W_{yx}|$ which 
completes the proof.
\qed

Corollary~\ref{cor_cart_2} can alternatively be deduced also from~\cite[Theorem 4.4]{fremi-2018}. 

\section{A characterization of $\ell$-distance-balanced graphs}
\label{sec:characterize}

If $G$ is a graph and $k$ a non-negative integer, then let 
$N_k(x) = \{y:\ d(x,y) = k\}$ and $N_k[x] = \{y:\ d(x,y) \leq k\}$. (Recall that $|N_1(x)|$ is the {\em degree} $\deg(x)$ of the vertex $x$.) In~\cite[Proposition 2.1]{jeklara-2008} it was proved that a graph $G$ of diameter $d$ is distance-balanced if and only if  
$$|N_1[a]\setminus N_1[b]|+ \sum_{k=2}^{d-1}|N_k(a)\setminus N_{k-1}(b)|=
|N_1[b]\setminus N_1[a]|+\sum_{k=2}^{d-1}|N_k(b)\setminus N_{k-1}(a)|$$
holds for every edge $ab\in E(G)$. An attempt to generalize this result  to $\ell$-distance-balanced graphs was given in~\cite[ Proposition 2.2]{fapokh-2016}. However, counterexamples were presented in \cite[Remark 4.3]{mispa-2018}. We now give an accordingly modified version of the result. 

\begin{proposition}
\label{char}
A graph $G$ of diameter $d$ is $\ell$-distance-balanced $(1\leq \ell \leq d)$ if and only if 
$$\sum_{k=1}^{d-1}|N_k(a)\setminus N_{k-1}[b]|=\sum_{k=1}^{d-1}|N_k(b)\setminus N_{k-1}[a]|$$
holds for all $a,b\in V(G)$ with $d(a,b)=\ell$.
\end{proposition}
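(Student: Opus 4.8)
The plan is to express both $|W_{ab}|$ and $|W_{ba}|$ as sums over distance classes and show that the claimed identity is exactly the statement $|W_{ab}| = |W_{ba}|$. Fix $a,b \in V(G)$ with $d(a,b)=\ell$. By definition, $W_{ab}$ consists of all vertices $z$ with $d(a,z) < d(b,z)$. I would partition $W_{ab}$ according to the value $k = d(a,z)$. A vertex $z$ with $d(a,z)=k$ lies in $W_{ab}$ precisely when $d(b,z) > k$, i.e. when $z \notin N_{k-1}[b]$ (equivalently $d(b,z) \ge k+1$). Hence for each $k$ the contribution to $W_{ab}$ from the sphere $N_k(a)$ is exactly $|N_k(a) \setminus N_{k-1}[b]|$, and summing over all admissible $k$ gives
$$|W_{ab}| = \sum_{k} |N_k(a) \setminus N_{k-1}[b]|\,.$$
The symmetric computation yields $|W_{ba}| = \sum_{k} |N_k(b) \setminus N_{k-1}[a]|$. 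Since $\ell$-distance-balancedness is exactly the assertion $|W_{ab}|=|W_{ba}|$ for all $a,b$ at distance $\ell$, matching these two sums term-for-term against the displayed identity will finish the proof in both directions.

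The only real subtlety, and the step I expect to be the main obstacle, is pinning down the correct range of the summation index $k$ so that it genuinely runs from $1$ to $d-1$ and no stray terms appear or vanish. First I would argue the lower end: $z=a$ itself has $d(a,z)=0$, but then $d(b,z)=\ell \ge 1 > 0$, so $a \in W_{ab}$; this contributes the $k=0$ term $|N_0(a)\setminus N_{-1}[b]| = |\{a\}| = 1$. The same holds symmetrically for $b \in W_{ba}$, so these two $k=0$ terms each equal $1$ and cancel against each other, which is why the displayed sum may legitimately start at $k=1$. Next I would handle the upper end: a vertex $z$ with $d(a,z)=d$ (the diameter) can satisfy $d(b,z)>d$ only if $d(b,z) \ge d+1$, which is impossible; so the sphere $N_d(a)$ contributes nothing to $W_{ab}$, justifying that the sum need only run up to $k=d-1$. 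I would verify the analogous statement for $W_{ba}$, so the $k=d$ terms are identically zero on both sides and may be dropped.

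With the range settled, the core identity reads
$$|W_{ab}| = 1 + \sum_{k=1}^{d-1} |N_k(a)\setminus N_{k-1}[b]| \qquad\text{and}\qquad |W_{ba}| = 1 + \sum_{k=1}^{d-1} |N_k(b)\setminus N_{k-1}[a]|\,,$$
so the two cardinalities are equal if and only if the two truncated sums are equal. For the forward direction I assume $G$ is $\ell$-distance-balanced, take any $a,b$ with $d(a,b)=\ell$, invoke $|W_{ab}|=|W_{ba}|$, cancel the common $+1$, and read off the displayed equation. For the converse I assume the displayed equation holds for all such pairs, add $1$ to both sides to recover $|W_{ab}|=|W_{ba}|$, and conclude that $G$ is $\ell$-distance-balanced by definition. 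The whole argument is essentially a careful bookkeeping of which sphere $N_k(a)$ each vertex of $W_{ab}$ falls into, and the one place where a careless off-by-one error would creep in is precisely the treatment of the $k=0$ and $k=d$ endpoints described above.
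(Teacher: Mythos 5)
Your argument has a genuine gap, and it sits exactly at the spot you flagged as the danger zone. You assert that a vertex $z$ with $d(a,z)=k$ lies in $W_{ab}$ precisely when $z\notin N_{k-1}[b]$, ``equivalently $d(b,z)\ge k+1$''. But $z\notin N_{k-1}[b]$ means $d(b,z)\ge k$, not $d(b,z)\ge k+1$; the set encoding the condition $d(b,z)>k$ is $N_k(a)\setminus N_{k}[b]$, not $N_k(a)\setminus N_{k-1}[b]$. The difference between these two sets is $N_k(a)\cap N_k(b)$, the vertices \emph{equidistant} from $a$ and $b$ at distance $k$, so your central identity $|W_{ab}| = 1+\sum_{k=1}^{d-1}|N_k(a)\setminus N_{k-1}[b]|$ is false in general. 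Concretely, take $G=C_6$ with vertices $v_0,\dots,v_5$ in cyclic order and $a=v_0$, $b=v_2$ (so $\ell=2$, $d=3$): then $W_{ab}=\{v_0,v_5\}$ has two elements, while
$$1+|N_1(a)\setminus N_0[b]|+|N_2(a)\setminus N_1[b]| \;=\; 1+2+1 \;=\; 4,$$
because the equidistant vertices $v_1$ and $v_4$ are wrongly included. So the displayed sum in the proposition is \emph{not} a term-by-term rewriting of $|W_{ab}|-1$; that is precisely the subtlety of this statement.

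The proposition is nevertheless true, and the idea you are missing is that the overcount is symmetric in $a$ and $b$. The paper's proof starts from the correct decomposition $W_{ab}=\{a\}\cup\bigcup_{k=1}^{d-1}\bigl(N_k(a)\setminus N_{k}[b]\bigr)$ and then rewrites each piece as
$$N_k(a)\setminus N_{k}[b] \;=\; \bigl(N_k(a)\setminus N_{k-1}[b]\bigr)\setminus\bigl(N_k(a)\cap N_k(b)\bigr),$$
which gives $|W_{ab}| = 1+\sum_{k=1}^{d-1}\bigl(|N_k(a)\setminus N_{k-1}[b]| - |N_k(a)\cap N_k(b)|\bigr)$, and symmetrically for $|W_{ba}|$. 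Since the correction terms $|N_k(a)\cap N_k(b)|$ are identical on both sides, they cancel when $|W_{ab}|$ and $|W_{ba}|$ are compared, and only then does $|W_{ab}|=|W_{ba}|$ become equivalent to the displayed equality of sums. Your treatment of the endpoints $k=0$ and $k=d$ is fine; the fix is to replace your false identity with the corrected one and add this cancellation-by-symmetry step.
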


\proof
Let $a$ and $b$ be arbitrary vertices of $G$ with $d(a,b)=\ell$. 
Then $W_{ab}$ and $W_{ba}$ can be written as 
$$W_{ab} = \{a\} \cup \bigcup_{k=1}^{d-1} (N_k(a)\setminus N_{k}[b])=
           \{a\} \cup \bigcup_{k=1}^{d-1} \bigg((N_k(a)\setminus N_{k-1}[b]) \setminus (N_k(a)\cap N_k(b))\bigg)$$
and 
$$W_{ba}=  \{b\} \cup \bigcup_{k=1}^{d-1} (N_k(b)\setminus N_{k}[a])=
           \{b\} \cup \bigcup_{k=1}^{d-1} \bigg((N_k(b)\setminus N_{k-1}[a]) \setminus (N_k(b)\cap N_k(a))\bigg)$$				
Since $N_k(a)\cap N_k(b)$ is a subset of both $N_k(a)$ and $N_k(b)$, the result follows.

\qed

\begin{corollary}
\label{2db-degree}
If $G$ is a graph of diameter $2$, then the following statements are equivalent. 
\begin{enumerate}[(i)]
\item $G$ is $2$-distance-balanced. 
\item $\deg_G(a) = \deg_G(b)$ for every $a,b\in V(G)$ with $d(a,b)=2$.
\item $G$ is a regular graph, or a nonregular join of at least two regular graphs. 
\end{enumerate}
\end{corollary}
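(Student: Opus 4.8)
The plan is to prove the two biconditionals (i) $\Leftrightarrow$ (ii) and (ii) $\Leftrightarrow$ (iii) separately. The first is essentially a specialization of Proposition~\ref{char}, while the second is a purely structural statement that I would settle by passing to the complement $\overline{G}$. The unifying observation I would record at the outset is that in a graph of diameter $2$ two vertices lie at distance $2$ precisely when they are non-adjacent; hence condition (ii) says exactly that $G$ is \emph{locally regular} in the sense defined before Proposition~\ref{prop:2DB}.

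For (i) $\Leftrightarrow$ (ii) I would apply Proposition~\ref{char} with $d=\ell=2$. The sum then runs only over $k=1$, and since $N_0[b]=\{b\}$ and $N_0[a]=\{a\}$, the displayed condition collapses to $|N_1(a)\setminus\{b\}|=|N_1(b)\setminus\{a\}|$ for all $a,b$ with $d(a,b)=2$. As such a pair is non-adjacent, $b\notin N_1(a)$ and $a\notin N_1(b)$, so the condition simplifies to $\deg_G(a)=\deg_G(b)$, which is exactly (ii).

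For (ii) $\Leftrightarrow$ (iii), having identified (ii) with local regularity, I would argue through the complement, using that $\deg_G(v)+\deg_{\overline{G}}(v)=n-1$ (so $G$-degrees agree iff $\overline{G}$-degrees agree) and that $G$ is a join of graphs $G_1,\dots,G_m$ iff $\overline{G}$ is their disjoint union. Local regularity means adjacent vertices of $\overline{G}$ have equal $\overline{G}$-degree; propagating along paths shows that every connected component of $\overline{G}$ is regular. If $\overline{G}$ is connected this forces $\overline{G}$, hence $G$, to be regular. If $\overline{G}$ is disconnected with components $C_1,\dots,C_m$ ($m\ge 2$), then $G=G[C_1]+\cdots+G[C_m]$ is a join of the regular induced subgraphs $G[C_i]$. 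Conversely, in any join $G_1+\cdots+G_m$ of regular graphs two non-adjacent vertices necessarily lie in a common part $G_i$, and writing $G_i$ as $r_i$-regular and $n=|V(G)|$, every vertex of $G_i$ has $G$-degree $r_i+(n-|V(G_i)|)$, so they share the same degree, giving (ii). Folding the case of a regular join into the ``regular'' alternative yields precisely the dichotomy in (iii).

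I expect the main obstacle to be the bookkeeping in (ii) $\Leftrightarrow$ (iii): one must ensure that a join of regular graphs which happens to be regular is absorbed into the first alternative of (iii), so that the two alternatives ``regular'' and ``nonregular join of at least two regular graphs'' are exhaustive. The complement translation itself is routine once one recalls that joins correspond to disjoint unions under complementation; the only genuinely new input is the path-propagation argument establishing that each component of $\overline{G}$ is regular.
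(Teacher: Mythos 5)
Your proof is correct, but it diverges from the paper's in a substantive way on the structural equivalence. For (i) $\Leftrightarrow$ (ii) you and the paper do the same thing: specialize Proposition~\ref{char} to $d=\ell=2$, where the sum collapses to $k=1$ and, since vertices at distance $2$ are non-adjacent, $|N_1(a)\setminus N_0[b]|=\deg_G(a)$; the paper merely asserts this ``easily follows,'' while you spell out the computation. The real difference is in the second equivalence: the paper does not prove it at all, but instead invokes the literature, citing \cite[Theorem 4.2]{mispa-2018} for (i) $\Leftrightarrow$ (iii). You instead prove (ii) $\Leftrightarrow$ (iii) from scratch by complementation: local regularity of $G$ translates (via $\deg_G(v)+\deg_{\overline{G}}(v)=n-1$) into adjacent vertices of $\overline{G}$ having equal degree, path-propagation makes every component of $\overline{G}$ regular, and the dichotomy ``$\overline{G}$ connected / disconnected'' becomes exactly ``$G$ regular / $G$ a join of the (regular) complements of the components,'' with the converse direction handled by the observation that non-adjacent vertices of a join lie in a common part and there share the degree $r_i+(n-|V(G_i)|)$. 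Your handling of the exhaustiveness worry (a join of regular graphs that happens to be regular is absorbed into the first alternative) is also sound. What each approach buys: the paper's citation keeps the corollary to two lines, at the cost of depending on an external result; your argument makes the corollary self-contained and elementary, and as a byproduct gives a direct proof of the Miklavič--Šparl characterization of $2$-distance-balanced diameter-$2$ graphs, at the cost of a paragraph of (routine but necessary) bookkeeping.
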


\proof
The equivalence $(i) \Leftrightarrow (ii)$ easily follows from Proposition~\ref{char}, 
while the equivalence $(i) \Leftrightarrow (iii)$ was proved in~\cite[Theorem 4.2]{mispa-2018}. 
\qed

\section*{Acknowledgements}

We thank referees for numerous useful remarks which, in particular, enabled us to significantly shorten some of the arguments. We acknowledge the financial support from the Slovenian Research Agency 
(research core funding No.\ P1-0297 and projects J1-8130, J1-9109, J1-1693, N1-0095).

\end{document}